\theoremstyle{plain}
\newtheorem{theorem}{Theorem}[section]
\newtheorem{lemma}[theorem]{Lemma}
\numberwithin{equation}{section}
\theoremstyle{plain}
\theoremstyle{remark}
\def\bbR{{\mathbb {R}}}
\def\fG{\frak G}
\def\cG{\mathcal G}
\begin{document}

\date{July, 2011}

\title[]
{Exceptional sets of projections, unions of $k$-planes, and associated transforms}

\begin{abstract}
We prove a generalization of a result of Peres and Schlag on the dimensions of certain exceptional sets 
of projections and then apply it to a geometric problem.
\end{abstract}

\author[]
{Daniel M. Oberlin}

\address
{D. M.  Oberlin \\
Department of Mathematics \\ Florida State University \\
 Tallahassee, FL 32306}
\email{oberlin@math.fsu.edu}

\subjclass{Primary 28A75; Secondary 42B10}
\keywords{dimension, projection, $k$-plane, Fourier transform}

\maketitle

\section{Introduction}

Let $\cG (n,\ell)$ denote the $\ell(n-\ell)$-dimensional
Grassman manifold of $\ell$-planes passing through the origin in $\bbR^n$.
The following is a restatement of an observation of Peres and Schlag (Proposition 6.1 in \cite{PS})
which generalizes earlier results of Kaufman \cite{K} and Falconer \cite{F}:

\begin{theorem}\label{PStheorem}
Suppose $n\geq 2$ and $1\leq \ell <n$ are integers. Suppose $0<\alpha <n$, $0<\sigma <\ell$, and
$\ell(n-\ell) +\sigma -\alpha <\beta<\ell(n-\ell)$.
Suppose that the nonnegative Borel measure $\lambda$ on $\cG (n,\ell )$ is $\beta$-dimensional 
in the sense that 
$\lambda \big(B(\pi ,r)\big)\lesssim r^\beta$ for $\pi\in\cG (n,\ell )$ and $r>0$. Suppose that $E\subset\bbR^n$ 
is a Borel set with Hausdorff dimension at least $\alpha$. 
For $\pi\in\cG (n,\ell )$
let $P_\pi$ be the orthogonal projection
of $\bbR^n$ onto $\pi$. Then for 
$\lambda$-almost all $\pi$, $P_\pi (E)$ has Hausdorff dimension at least $\sigma$. 
\end{theorem}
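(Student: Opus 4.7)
I would follow the classical potential-theoretic strategy underlying the Peres--Schlag framework. By the strict inequality in the hypothesis, first choose $s\in(\sigma,\alpha)$ with $\beta > \ell(n-\ell)+\sigma-s$ still holding; since $\dim_H E\geq\alpha>s$, Frostman's lemma furnishes a nonzero compactly supported Borel measure $\mu$ on $E$ with $\mu(B(x,r))\lesssim r^s$. For each $\pi\in\cG(n,\ell)$ set $\mu_\pi=(P_\pi)_*\mu$. It suffices to show the $\sigma$-energy
\begin{equation*}
I_\sigma(\mu_\pi) = \int\int \frac{d\mu(x)\,d\mu(y)}{|P_\pi(x-y)|^\sigma}
\end{equation*}
is finite for $\lambda$-almost every $\pi$, since then $\mathrm{supp}\,\mu_\pi\subset P_\pi(E)$ has Hausdorff dimension at least $\sigma$. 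By Tonelli, this reduces to bounding the triple integral $\int\int\int_{\cG(n,\ell)} |P_\pi(x-y)|^{-\sigma}\,d\lambda(\pi)\,d\mu(x)\,d\mu(y)$.

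The geometric input is a transversality/decay estimate on the Grassmannian. For nonzero $z\in\bbR^n$, the set $\{\pi:|P_\pi(z)|\leq r\}$ is contained in an $(r/|z|)$-neighborhood of the codimension-$\ell$ sub-Grassmannian $\{\pi:z\perp\pi\}\cong\cG(z^\perp,\ell)$, on which $|P_\pi(z)|$ vanishes; covering this tube by balls and invoking the $\beta$-Frostman hypothesis on $\lambda$ gives $\lambda\{\pi:|P_\pi(z)|\leq r\}\lesssim (r/|z|)^{\beta-\ell(n-\ell)+\ell}$ whenever the exponent is positive. Combining this with the Frostman decay of $\mu$ against anisotropic slabs $\{y:|P_\pi(x-y)|\leq r\}$, via a dyadic decomposition in the scales of $|x-y|$ and of the ratio $|P_\pi(x-y)|/|x-y|$, reduces the triple integral to a double dyadic sum that converges precisely when $\beta+s>\ell(n-\ell)+\sigma$, i.e.\ under our choice of $s$.

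The main obstacle is the dyadic bookkeeping in the regime $\beta-\ell(n-\ell)+\ell\leq\sigma$, where the naive layer-cake bound $\int|P_\pi(z)|^{-\sigma}\,d\lambda(\pi)\lesssim|z|^{-\sigma}$ fails because the inner integral diverges pointwise in $z$. In that range the convergence of the triple integral must be extracted from the joint effect of the two Frostman decays rather than from a pointwise estimate---the Grassmannian decay of $\lambda$ controls the distribution of ``bad'' planes, while the Euclidean decay of $\mu$ controls how often a given pair $(x,y)$ can land in the corresponding slab. Balancing these two decays exactly so as to match the sharp hypothesis $\beta>\ell(n-\ell)+\sigma-\alpha$ is the technical heart of the Peres--Schlag argument.
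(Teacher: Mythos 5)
First, a point of orientation: the paper does not prove this statement at all --- it is quoted from Peres and Schlag \cite{PS} --- so the relevant comparison is with the method used for the paper's own analogue, Theorem \ref{projections}, which is Fourier-analytic: one bounds $\int\int_{R\le|\xi|\le 2R}|\widehat{P_\pi\mu}(\xi)|^2\,d\xi\,d\lambda(\pi)$ per dyadic shell by combining the $L^2$-average decay $\int_{|\xi|\le R}|\widehat{\mu}(\xi)|^2\,d\xi\lesssim R^{n-\alpha}$ with the transversal decay of $\lambda$ (Lemma \ref{mainlemma}). Your route is the physical-space, potential-theoretic one, and it has a genuine gap exactly where you wave at ``the technical heart'': in the regime $\alpha>\ell$ the two Frostman conditions, fed through Fubini, provably do not yield the stated exponent. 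Concretely, decompose the triple integral over $|x-y|\approx 2^{-j}$ and $|P_\pi(x-y)|\approx 2^{-j-m}$. You have exactly two bounds for the measure of each piece: integrating in $\pi$ first (your tube estimate) gives $2^{-js}\,2^{-m(\beta-\ell(n-\ell)+\ell)}$, and integrating in $y$ first over the slab of thickness $2^{-j-m}$ and extent $2^{-j}$ gives $2^{-js}\min\bigl(1,2^{m(n-\ell-s)}\bigr)$. You may take the minimum, or any geometric mean, of these two upper bounds, but you cannot multiply them; hence the $m$-sum $\sum_m 2^{m\sigma}\min\bigl(2^{-m(\beta-\ell(n-\ell)+\ell)},2^{m(n-\ell-s)}\bigr)$ converges only when $\sigma<\max\bigl(\beta-\ell(n-\ell)+\ell,\ s-(n-\ell)\bigr)$, i.e.\ under $\beta>\ell(n-\ell)+\sigma-\ell$ or $\alpha>n-\ell+\sigma$ --- not under the hypothesis $\beta>\ell(n-\ell)+\sigma-\alpha$.

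For explicit parameters where the theorem applies but your scheme does not close, take $n=3$, $\ell=1$, $\alpha=2$, $\sigma=9/10$, $\beta=3/2$: the hypothesis $\ell(n-\ell)+\sigma-\alpha=9/10<\beta<2$ holds, yet $\beta-\ell(n-\ell)+\ell=1/2<\sigma$ and $s-(n-\ell)\le 0<\sigma$, so both dyadic bounds fail and no interpolation between them helps. (When $\alpha\le\ell$ your argument is fine --- it is essentially Kaufman's, and there the layer-cake already closes because the hypothesis forces $\beta-\ell(n-\ell)+\ell>\sigma$.) The missing idea in the hard regime is to pass to the Fourier side: by the projection-slice identity $\widehat{\mu_\pi}(\xi)=\widehat{\mu}(\xi)$ for $\xi\in\pi$, the $\sigma$-energy of $\mu_\pi$ becomes $\int_\pi|\widehat{\mu}(\xi)|^2|\xi|^{\sigma-\ell}\,d\xi$, and the Frostman condition on $\mu$ enters through the average decay $\int_{|\xi|\le R}|\widehat{\mu}|^2\lesssim R^{n-\alpha}$, which extracts a factor $R^{-\alpha}$ per shell where slab-counting in physical space can only extract $R^{-\ell}$. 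Without this (or an equivalent substitute) your proposal proves a strictly weaker theorem.
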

\noindent The first result of this note is that if $\lambda$ satisfies a stronger version of $\beta$-dimensionality
(see \eqref{lambdahypagain} below), then the hypothesis $\ell(n-\ell) +\sigma -\alpha <\beta$ 
in Theorem \ref{PStheorem} can be weakened to $(n-\ell) +\sigma -\alpha <\beta$. Next let $\fG (d,k)$
denote the $(k+1)(d-k)$-dimensional Grassman manifold of \textbf{all} $k$-planes in $\bbR^d$. Our 
second theorem, which we will deduce from the first by taking $n=(k+1)(d-k)$ and $\ell =d-k$, states 
that if $S\subset \fG(d,k)$ has dimension exceeding $(k+1)(d-k)-k$, then 
$
\cup_{\pi\in S} \,\pi
$
has positive Lebesgue measure in $\bbR^d$ - for example, the union of a $(2d-3+\epsilon)$-dimensional collection of lines in $\bbR^d$ has positive measure. (The much more difficult Kakeya conjecture is that the union of any 
collection of lines containing at least one line in each direction has full dimension.) Our last observation is an estimate for a $k$-plane transform which is natural in the context of the above-mentioned union problem. To state these results precisely, we introduce some notation.

We parametrize by $\bbR^{\ell(n-\ell )}$ a collection of projections equivalent to
almost all of the projections $\{P_\pi :\pi\in\cG (n,\ell)\}$:
write $x=(x_i^j )$, where $i=1,\dots ,\ell$, $j=1,\dots ,n-\ell$, for an element of $\bbR^{\ell (n-\ell )}$.
Let $P_x :\bbR^n \rightarrow \bbR^\ell$ be defined by
\begin{equation}\label{projdef}
P_x :p=(p_1 ,\dots ,p_n )\mapsto \big(p_1 +\sum_{j=1}^{n-\ell}x_1^j p_{\ell+j}\, ,\,
p_2 +\sum_{j=1}^{n-\ell}x_2^j p_{\ell+j}\, ,\dots ,\, p_\ell +\sum_{j=1}^{n-\ell}x_\ell^j p_{\ell+j}\big).
\end{equation}
For $\xi\in\bbR^\ell$, define $T_x :\bbR^\ell \rightarrow \bbR^{n-\ell}$ by 
\begin{equation*}
T_x \xi=\big(\sum_{i=1}^\ell x_i^1 \xi_i ,\,\dots ,\sum_{i=1}^\ell x_i^{n-\ell} \xi_i \big).
\end{equation*}
For later use we note the identity
\begin{equation}\label{relation}
\langle \xi , P_x p \rangle =\langle (\xi ,T_x \xi ),p \rangle .
\end{equation}
Following \cite{PS}, we will say that a set $F\subset\bbR^\ell$ has Sobolev dimension at least $\sigma$ if
$F$ carries a Borel probability measure $\nu$ such that 
\begin{equation*}
\int_{\bbR^\ell}|\widehat {\nu}(\xi )|^2 \,
\frac{d\xi}{(1+|\xi |)^{\ell -\sigma}}
 <\infty .
\end{equation*}

Our analog of Theorem \ref{PStheorem} is the following:

\begin{theorem}\label{projections}
Suppose $\lambda$ is a compactly-supported nonnegative Borel measure on $\bbR^{\ell(n-\ell)}$ which satisfies the condition
\begin{equation}\label{lambdahypagain}
\lambda\big(\{x\in\bbR^{\ell (n-\ell )}:|T_x \xi -p_2 |\leq r\}\big)\leq c\,  r^\beta
\end{equation}
for some $c>0$ and all $\xi\in\bbR^\ell$ with $|\xi |=1$, $p_2 \in\bbR^{n-\ell}$, and $r>0$.
Suppose $E\subset\bbR^n$ is a Borel set with Hausdorff dimension at least $\alpha$. 
Suppose 
\begin{equation}\label{sigmahyp}
n-\ell+\sigma -\alpha<\beta .
\end{equation}
Then for $\lambda$-almost all $x\in\bbR^{\ell(n-\ell)}$, $P_x (E)$ has Sobolev dimension at least $\sigma$.
\end{theorem}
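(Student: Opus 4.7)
The strategy is to adapt the Peres--Schlag proof of Theorem \ref{PStheorem}, replacing their ball-based Frostman hypothesis on $\lambda \subset \bbR^{\ell(n-\ell)}$ with the weaker cylindrical hypothesis \eqref{lambdahypagain}. Because \eqref{lambdahypagain} only controls the $(n-\ell)$-dimensional pushforward measures $\lambda_\xi := (T_\cdot \xi)_*\lambda$ on $\bbR^{n-\ell}$, the effective ambient dimension drops from $\ell(n-\ell)$ to $n-\ell$, which is precisely what yields the improved constraint \eqref{sigmahyp}. By Frostman's lemma, fix $\alpha' \in (0,\alpha)$ close enough to $\alpha$ that $n-\ell+\sigma-\alpha'<\beta$ still holds, and let $\mu$ be a compactly supported Borel probability measure on $E$ with $\mu(B(p,r)) \lesssim r^{\alpha'}$, so that the $\alpha'$-energy $I_{\alpha'}(\mu) = \iint |p-q|^{-\alpha'}\,d\mu(p)\,d\mu(q)$ is finite. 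Set $\nu_x = (P_x)_*\mu$. By \eqref{relation}, $\widehat{\nu_x}(\xi) = \widehat{\mu}(\xi, T_x\xi)$, so it suffices to show
\begin{equation*}
I := \int \int_{\bbR^\ell} |\widehat{\mu}(\xi, T_x\xi)|^2 (1+|\xi|)^{\sigma-\ell}\,d\xi\,d\lambda(x) < \infty.
\end{equation*}

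Expanding $|\widehat{\mu}|^2$ as a double integral over $\mu \otimes \mu$ and invoking Fubini, one obtains $I = \iint K(p-q)\,d\mu(p)\,d\mu(q)$, where, writing $p-q = (r_1,r') \in \bbR^\ell \times \bbR^{n-\ell}$ and using that $\int e^{-2\pi i T_x\xi \cdot r'}\,d\lambda(x) = \widehat{\lambda_\xi}(r')$,
\begin{equation*}
K(r_1,r') = \int_{\bbR^\ell} (1+|\xi|)^{\sigma-\ell}\,e^{-2\pi i\xi\cdot r_1}\,\widehat{\lambda_\xi}(r')\,d\xi.
\end{equation*}
The problem is thereby reduced to a pointwise decay estimate of the form $|K(r)| \lesssim |r|^{-\alpha'}$ for $r \ne 0$, from which $|I| \lesssim I_{\alpha'}(\mu) < \infty$ follows at once.

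To estimate $K$, I would pass to polar coordinates $\xi = t\omega$ and use the scaling identity $\widehat{\lambda_{t\omega}}(r') = \widehat{\lambda_\omega}(tr')$. The hypothesis \eqref{lambdahypagain} gives $\lambda_\omega(B(y,\rho)) \lesssim \rho^\beta$ uniformly in $\omega \in S^{\ell-1}$, and the standard energy identity then yields the $L^2$-averaged Fourier decay $\int_{\bbR^{n-\ell}} |\widehat{\lambda_\omega}(\zeta)|^2 (1+|\zeta|)^{s-(n-\ell)}\,d\zeta \lesssim 1$ for every $s < \beta$. Combining this decay with a Cauchy--Schwarz bound on the $t$-integral and the spherical oscillation produced by the factor $e^{-2\pi i\xi \cdot r_1}$, the exponents balance to give the required decay of $K$ precisely when $n-\ell+\sigma-\alpha' < \beta$. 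The main obstacle is the careful execution of this kernel estimate: one must simultaneously exploit the $(n-\ell)$-dimensional $L^2$-Fourier decay of $\widehat{\lambda_\xi}$, the polynomial weight $(1+|\xi|)^{\sigma-\ell}$, and the oscillation in $r_1$, with the condition \eqref{sigmahyp} appearing as the exact threshold at which all these competing factors combine. This is where the cylindrical form of \eqref{lambdahypagain} -- controlling $\lambda$ only through its $(n-\ell)$-dimensional shadows -- provides the essential saving that reduces the ambient dimension from $\ell(n-\ell)$ to $n-\ell$ relative to the Peres--Schlag argument.
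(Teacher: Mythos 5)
Your setup (Frostman measure $\mu$ with exponent $\alpha'$ close to $\alpha$, the identity $\widehat{(P_x)_*\mu}(\xi)=\widehat{\mu}(\xi,T_x\xi)$, and the reduction to showing $\int\int|\widehat{\mu}(\xi,T_x\xi)|^2(1+|\xi|)^{\sigma-\ell}\,d\xi\,d\lambda(x)<\infty$) coincides with the paper's. From there you diverge: the paper does not expand $|\widehat{\mu}|^2$ over $\mu\otimes\mu$ and seek a pointwise kernel bound. Instead, Lemma \ref{mainlemma} proves the $L^2$ annulus estimate $\int\int_{R\leq|\xi|\leq 2R}|\widehat{gd\mu}(\xi,T_x\xi)|^2\,d\xi\,d\lambda(x)\lesssim R^{n-\alpha-\beta}\|g\|^2_{L^2(\mu)}$ by writing $\widehat{gd\mu}=\widehat{\kappa}\ast\widehat{gd\mu}$ for a cutoff $\kappa\equiv 1$ on $\mathrm{supp}\,\mu$, bounding the $x$-integral of the ball-type kernel $(1+|T_x\xi-p_2|)^{-M_2}$ by $R^{-\beta}$ directly from \eqref{lambdahypagain} (using $|\xi|\geq R$), and then invoking $\int_{B(0,cR)}|\widehat{gd\mu}|^2\lesssim R^{n-\alpha}\|g\|^2_{L^2(\mu)}$, which is where the Frostman condition on $\mu$ enters; summing over dyadic $R=2^j$ gives the theorem under \eqref{sigmahyp}.

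The decisive step of your plan, the pointwise bound $|K(r)|\lesssim|r|^{-\alpha'}$, is not carried out, and the mechanism you propose for it cannot work when $n-\ell\geq 2$. After passing to polar coordinates, the $t$-integral samples $\widehat{\lambda_\omega}$ only along the single ray $\{tr':t>0\}\subset\bbR^{n-\ell}$, whereas the energy bound $\int_{\bbR^{n-\ell}}|\widehat{\lambda_\omega}(\zeta)|^2(1+|\zeta|)^{s-(n-\ell)}\,d\zeta\lesssim 1$ controls $\widehat{\lambda_\omega}$ only in $L^2$ of the full space; an $L^2(\bbR^{n-\ell})$ bound says nothing about the restriction of a function to a measure-zero ray, so your Cauchy--Schwarz step has nothing to pair against. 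Equivalently, in physical space the behavior of $t\mapsto\widehat{\lambda_\omega}(tr')$ is governed by the $\lambda_\omega$-measure of \emph{slabs} $\{u:|u\cdot\widehat{r'}-c|\leq\delta\}$, and the ball condition $\lambda_\omega(B(y,\rho))\lesssim\rho^\beta$ coming from \eqref{lambdahypagain} does not control slabs: a measure with $\beta=n-\ell-1$ can be supported in a hyperplane, making $\widehat{\lambda_\omega}$ identically $1$ on the perpendicular ray. A Kaufman-style argument that integrates out $\xi$ first would therefore only yield a condition involving the (possibly much smaller) slab-decay exponent of $\lambda_\xi$, not $\beta$, and would not prove the stated theorem. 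The point of the paper's arrangement is precisely to keep the $\lambda$-integration paired with a ball-type kernel in $T_x\xi$, which is the object \eqref{lambdahypagain} actually controls.
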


Next we parametrize almost all of $\fG (d,k)$ as follows: 
write $y=(y^j_i )$ where $i=0,\dots ,k$, $j=1,\dots ,d-k$ for an element of $\bbR^{(k+1)(d-k)}$ 
and $\pi_y$ for the $k$-plane in $\bbR^d$ given by 
\begin{equation*}
\big\{\big(x_1 ,\dots ,x_k, \, y_0^1 +\sum_{i=1}^k x_i y^1_i ,\,\dots ,\, y_0^{d-k} +\sum_{i=1}^k x_i y^{d-k}_i \big):
x=(x_1 ,\dots ,x_k )\in \bbR^k \big\}.
\end{equation*}
%

\begin{theorem}\label{unions}
Suppose $S\subset \bbR^{(k+1)(d-k)}$ is a compact set with Hausdorff dimension $\alpha >(k+1)(d-k)-k$.
Then 
\begin{equation*}
\bigcup_{y\in S}\pi_y
\end{equation*}
has positive Lebesgue measure in $\bbR^d$.
\end{theorem}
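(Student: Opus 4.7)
The plan is to deduce Theorem \ref{unions} from Theorem \ref{projections} via the substitution $n=(k+1)(d-k)$, $\ell=d-k$, $E=S$. Reordering the coordinates of $y\in\bbR^n$ so that the first $\ell$ entries are $(y_0^1,\ldots,y_0^{d-k})$ and the remaining $n-\ell=k(d-k)$ entries are indexed by pairs $(i',j')$ with $1\le i'\le k$, $1\le j'\le d-k$, the map $y\mapsto\bigl(y_0^j+\sum_{i=1}^k x_iy_i^j\bigr)_{j=1}^{d-k}$ --- which gives the ``$z$-coordinates'' of the point of $\pi_y$ lying over $x\in\bbR^k$ --- coincides with $P_{\iota(x)}$ in the sense of \eqref{projdef}, where $\iota:\bbR^k\to\bbR^{\ell(n-\ell)}$ is the linear embedding whose only nonzero entries are $\iota(x)_i^{(i',j')}=x_{i'}$ for $i=j'$. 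In particular, for any compact $Q\subset\bbR^k$,
\[
\bigcup_{y\in S}\pi_y \;\supset\; \bigcup_{x\in Q}\{x\}\times P_{\iota(x)}(S).
\]

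The heart of the argument is to take $\lambda$ as the pushforward under $\iota$ of Lebesgue measure on $Q$, and to verify that this $\lambda$ satisfies \eqref{lambdahypagain} with the small value $\beta=k$ rather than the naive full dimension $\ell(n-\ell)=k(d-k)^2$. A direct computation from the definition of $T_x$ gives
\[
T_{\iota(x)}\xi \;=\; (x_{i'}\xi_{j'})_{i',j'},
\]
the (flattened) tensor product $x\otimes\xi$, which satisfies $|T_{\iota(x)}\xi|=|x|$ for every unit $\xi\in\bbR^{d-k}$. Hence $x\mapsto T_{\iota(x)}\xi$ is an isometric embedding $\bbR^k\hookrightarrow\bbR^{n-\ell}$, and the set $\{x\in Q:|T_{\iota(x)}\xi-p_2|\le r\}$ is contained in a Euclidean $r$-ball in $\bbR^k$, so has Lebesgue measure $\lesssim r^k$. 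This yields \eqref{lambdahypagain} with $\beta=k$, and $\lambda$ is manifestly compactly supported.

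Now apply Theorem \ref{projections} with $\sigma=\ell=d-k$ and $\alpha$ equal to the Hausdorff dimension of $S$. Condition \eqref{sigmahyp} becomes $n-\ell+\sigma-\alpha=(k+1)(d-k)-\alpha<k=\beta$, which is precisely the assumption $\alpha>(k+1)(d-k)-k$. The theorem then furnishes, for Lebesgue-a.e.\ $x\in Q$, a probability measure $\nu_x$ supported on $P_{\iota(x)}(S)$ with $\widehat{\nu_x}\in L^2(\bbR^{d-k})$; by Plancherel $\nu_x$ is itself an $L^2$ function, so $P_{\iota(x)}(S)$ has positive Lebesgue measure in $\bbR^{d-k}$. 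Since $\bigcup_{y\in S}\pi_y$ is $F_\sigma$ (the countable union over $N$ of the continuous images of the compact sets $[-N,N]^k\times S$ under $(x,y)\mapsto(x,P_{\iota(x)}(y))$), it is Lebesgue measurable, and Fubini's theorem gives the conclusion. The main obstacle is the indexing bookkeeping that identifies the affine map $y\mapsto(y_0^j+\sum x_iy_i^j)_j$ with a projection $P_{\iota(x)}$; once that is set up, \eqref{lambdahypagain} is immediate from the isometry observation and the arithmetic matches \eqref{sigmahyp} exactly.
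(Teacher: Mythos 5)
Your proposal is correct and follows essentially the same route as the paper: both deduce the result from Theorem \ref{projections} with $n=(k+1)(d-k)$, $\ell=d-k$, with $\lambda$ the push-forward of Lebesgue measure on a cube in $\bbR^k$ under the embedding that makes $P_{\iota(x)}y=(y_0^j+\sum_i x_iy_i^j)_j$, and with \eqref{lambdahypagain} verified for $\beta=k$ via the computation $T_{\iota(x)}\xi=x\otimes\xi$ (the paper does this coordinate-by-coordinate, you via the isometry $|x\otimes\xi|=|x|$ for unit $\xi$ --- the same fact). The only cosmetic differences are that you take $\sigma=d-k$ exactly where the paper takes $\sigma>d-k$, and you make the measurability and Fubini steps explicit.
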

\noindent An easy example, which we will describe in \S 4, shows that the conclusion of Theorem \ref{unions}
may fail if 
$\alpha \leq(k+1)(d-k)-k$. 

Here is our $k$-plane estimate:

\begin{theorem}\label{transforms}
With notation as above, suppose $f\in C_c (\bbR^d )$ and define the 
$k$-plane transform $T$ by
\begin{equation*}
Tf(y)=\int_{[0,1]^k}f\big(x_1 ,\dots ,x_k ,
y_0^1 +\sum_{i=1}^k x_i y^1_i ,\,\dots ,\, y_0^{d-k} +\sum_{i=1}^k x_i y^{d-k}_i \big)\,dx 
\end{equation*}
for $y\in\bbR^{(k+1)(d-k)}$. Suppose $\mu$
is a compactly-supported nonnegative Borel measure on $\bbR^{(k+1)(d-k)}$ satisfying 
\begin{equation*}
\mu \big(B(y,r)\big)\leq c_2 \, r^{\alpha}
\end{equation*}
for some $\alpha\in \big((k+1)(d-k)-k,(k+1)(d-k)\big)$, some $c_2 >0$, all $y\in\bbR^{(k+1)(d-k)}$, and all $r>0$.
Fix $\epsilon$ with $0<\epsilon <\alpha -(k+1)(d-k)+k$, and 
define $q$ by 
\begin{equation*}
\frac{1}{2}-\frac{1}{q}=\frac{\alpha -(k+1)(d-k)+k-\epsilon}{2(d-k)}.
\end{equation*}
Then there is the estimate
\begin{equation}\label{Test}
\|Tf\|_{L^2 (\mu )}\leq C\, \|f\|_{L^2_x (L^{q'}_{x'})},
\end{equation}
where 
we write an element of $\bbR^d$ as $(x,x' )\in \bbR^k \times \bbR^{d-k}$ and where
$q'$ is the exponent conjugate to $q$.
Here $C$ depends only on $c_2$, $\epsilon$, and the diameter of the support of $\mu$.
\end{theorem}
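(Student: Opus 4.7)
My plan is Fourier analytic. Let $\tilde f(x,\zeta)$ denote the partial Fourier transform of $f$ in its second ($\bbR^{d-k}$) variable. Then Fourier inversion gives
\begin{equation*}
Tf(y)=\iint_{[0,1]^{k}\times\bbR^{d-k}}\tilde f(x,\zeta)\,e^{2\pi i\langle y_{0},\zeta\rangle}\,e^{2\pi i\langle x,A(\tilde y)^{T}\zeta\rangle}\,dx\,d\zeta,
\end{equation*}
where $A(\tilde y)^{T}\zeta\in\bbR^{k}$ has $i$-th entry $\sum_{j}y_{i}^{j}\zeta_{j}$. Squaring $|Tf(y)|^{2}$, integrating against $d\mu(y)$, and carrying out the $y$-integration first produces the identity
\begin{equation*}
\|Tf\|_{L^{2}(\mu)}^{2}=\iiiint\tilde f(x_{1},\zeta_{1})\,\overline{\tilde f(x_{2},\zeta_{2})}\,\hat\mu(\zeta_{2}-\zeta_{1},-v)\,dx_{1}\,dx_{2}\,d\zeta_{1}\,d\zeta_{2},
\end{equation*}
with $v_{i}^{j}=x_{1,i}\zeta_{1,j}-x_{2,i}\zeta_{2,j}$ and $\hat\mu$ the Fourier transform of $\mu$ on $\bbR^{d-k}\times\bbR^{k(d-k)}=\bbR^{(k+1)(d-k)}$.

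Applying the AM--GM inequality $|\tilde f(x_{1},\zeta_{1})\overline{\tilde f(x_{2},\zeta_{2})}|\leq\tfrac{1}{2}(|\tilde f(x_{1},\zeta_{1})|^{2}+|\tilde f(x_{2},\zeta_{2})|^{2})$ and the natural symmetry between $(x_{1},\zeta_{1})$ and $(x_{2},\zeta_{2})$ reduces the problem to
\begin{equation*}
\int|\tilde f(x,\zeta)|^{2}\,W(x,\zeta)\,dx\,d\zeta\leq C\,\|f\|_{L^{2}_{x}L^{q'}_{x'}}^{2},
\end{equation*}
where $W(x,\zeta)=\int_{[0,1]^{k}\times\bbR^{d-k}}|\hat\mu(\zeta'-\zeta,-v)|\,dx'\,d\zeta'$. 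The core task is to establish the pointwise bound $W(x,\zeta)\lesssim(1+|\zeta|)^{-s}$, uniformly for $x\in[0,1]^{k}$, for some $s>\alpha-(k+1)(d-k)+k-\epsilon$. After the substitution $\eta=\zeta'-\zeta$, the map $x'\mapsto -v$ becomes affine in $x'$ with linear part $x'\mapsto x'\otimes(\eta+\zeta)$, a rank-one embedding of $\bbR^{k}$ into $\bbR^{k(d-k)}$ whose Jacobian equals $|\eta+\zeta|^{k}$. A judicious Cauchy--Schwarz combined with the Sobolev-type energy bound $\int|\hat\mu(\xi)|^{2}(1+|\xi|)^{\beta-(k+1)(d-k)}\,d\xi<\infty$, valid for every $\beta<\alpha$ as a consequence of the Frostman hypothesis on $\mu$, should then produce the desired decay.

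To close, I would apply Hausdorff--Young---which gives $\|\tilde f(x,\cdot)\|_{L^{q}_{\zeta}}\leq\|f(x,\cdot)\|_{L^{q'}_{x'}}$ since $q'\leq 2$---together with H\"older's inequality in $\zeta$ using the conjugate pair $q/2$ and $q/(q-2)$. The weight integral $\int_{\bbR^{d-k}}(1+|\zeta|)^{-sq/(q-2)}\,d\zeta$ converges exactly when $s>\alpha-(k+1)(d-k)+k-\epsilon$, which is guaranteed by the defining relation $\tfrac{1}{2}-\tfrac{1}{q}=\tfrac{\alpha-(k+1)(d-k)+k-\epsilon}{2(d-k)}$ and the room afforded by $\epsilon>0$. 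The principal obstacle is the weight estimate for $W$: since $x'\mapsto -v$ has a degenerate rank-one image inside $\bbR^{k(d-k)}$, one must integrate the Sobolev decay of $\hat\mu$ along a family of $k$-flats rather than over the ambient $\bbR^{(k+1)(d-k)}$. The Jacobian factor $|\eta+\zeta|^{-k}$ arising from this degeneracy is exactly what generates the ``$+k$'' in the exponent and matches the codimension threshold $(k+1)(d-k)-k$ that also underlies Theorem \ref{unions}.
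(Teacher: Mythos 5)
Your reduction is set up correctly: the bilinear identity for $\|Tf\|_{L^2(\mu)}^2$, the AM--GM symmetrization, and the closing Hausdorff--Young/H\"older step (whose exponent arithmetic does match the definition of $q$) are all sound. But the step you yourself flag as the core task --- the pointwise bound $W(x,\zeta)\lesssim(1+|\zeta|)^{-s}$ with $s>\alpha-(k+1)(d-k)+k-\epsilon$ --- is not merely unproved; it is false for general Frostman measures. The failure occurs exactly at the degeneracy you mention: on the region $|\zeta'|\lesssim 1$ (i.e.\ $\eta$ near $-\zeta$) the rank-one map $x'\mapsto x'\otimes\zeta'$ collapses, and that portion of $W(x,\zeta)$ is comparable to $\sup\{|\widehat{\mu}(\xi)|:\xi\in B(P,C)\}$ with $P=(-\zeta,-x\otimes\zeta)$. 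A measure satisfying only $\mu(B(y,r))\leq c_2r^{\alpha}$ need not have any pointwise Fourier decay. Concretely, take $k=1$, $d=2$, and $\mu=\nu\times m$ with $\nu$ the middle-thirds Cantor measure in the $y_0$-variable and $m$ Lebesgue measure on $[0,1]$ in the $y_1$-variable, so $\alpha=1+\log 2/\log 3\in(1,2)$. Then $\widehat{\mu}(\zeta_2-\zeta_1,x_2\zeta_2)=\widehat{\nu}(\zeta_2-\zeta_1)\,\widehat{m}(x_2\zeta_2)$, and since $|\widehat{\nu}(3^n)|=|\widehat{\nu}(1)|>0$ for all $n$, one finds $W(0,\zeta_1)\geq c>0$ for every $\zeta_1$ in a fixed neighborhood of each $-3^n$. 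Hence $W(0,\cdot)\notin L^{q/(q-2)}(\bbR)$, the H\"older step cannot close, and no Cauchy--Schwarz against the energy $\int|\widehat{\mu}(\xi)|^2(1+|\xi|)^{\beta-(k+1)(d-k)}\,d\xi$ can rescue a pointwise statement that is false.

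The underlying problem is that AM--GM puts the absolute value inside and demands decay of $|\widehat{\mu}|$ along individual $d$-dimensional slices, whereas the only decay available is in the mean square over such slices. The paper works instead with the dual operator: it shows $\widehat{T^*g(x,\cdot)}(\xi)=\widehat{gd\mu}(\xi,T_x\xi)$ and invokes Lemma \ref{mainlemma}, which gives $\int_{[0,1]^k}\int_{2^j\leq|\xi|\leq 2^{j+1}}|\widehat{gd\mu}(\xi,T_x\xi)|^2\,d\xi\,dx\lesssim 2^{j((k+1)(d-k)-k-\alpha)}\|g\|^2_{L^2(\mu)}$; there the absence of pointwise decay is absorbed by an uncertainty-principle argument (writing $\widehat{gd\mu}=\widehat{\kappa}*\widehat{gd\mu}$ and using the Frostman bound for $\int_{B(0,c_3R)}|\widehat{gd\mu}|^2$), and the theorem follows by summing the dyadic pieces against the weight $|\xi|^{\alpha-(k+1)(d-k)+k-\epsilon}$ and applying fractional integration in $x'$. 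Your Jacobian heuristic does correctly identify the source of the ``$+k$'' (it is the $\beta=k$ in \eqref{lambdahypagain}), but to make your route rigorous you would have to replace the pointwise bound on $W$ by dyadic-annulus $L^2$ averages of $\widehat{gd\mu}$ along the graphs $(\xi,T_x\xi)$ --- which is precisely the content of the paper's lemma. As written, the proposal is missing the one genuinely substantive estimate.
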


The remainder of this note is organized as follows: \S2 contains the statement and proof of a lemma, 
\S3 contains the proofs of Theorems \ref{projections}, \ref{unions}, and \ref{transforms}, and \S4 contains 
some comments.

\section{ lemma}

\begin{lemma}\label{mainlemma} Suppose $\lambda$ is a compactly-supported nonnegative Borel measure on $\bbR^{\ell(n-\ell)}$ which satisfies the condition
\begin{equation}\label{lambdahyp}
\lambda\big(\{x\in\bbR^{\ell (n-\ell )}:|T_x \xi -p_2 |\leq r\}\big)\leq c_1 \, r^\beta
\end{equation}
for some $c_1 >0$, all $\xi\in\bbR^\ell$ with $|\xi |=1$, all $p_2\in\bbR^{n-\ell}$, and all $r>0$. Suppose $\mu$
is a compactly-supported nonnegative Borel measure on $\bbR^{n}$ satisfying 
\begin{equation}\label{muhyp1}
\mu \big(B(p,r)\big)\leq c_2 \, r^{\alpha}
\end{equation}
for some $\alpha\in (0,n)$, some $c_2 >0$, all $p\in\bbR^n$, and all $r>0$.
Then, for $g\in L^2 (\mu )$, we have
\begin{equation*}
\int \int_{R\leq |\xi |\leq 2R}\big|\widehat{gd\mu } (\xi ,T_x \xi ) \big|^2 \, d\xi \, d\lambda (x) 
\lesssim R^{n-\alpha -\beta} \|g\|^2_{L^2 (\mu )},
\end{equation*}
where the implied constant depends only on $c_1$, $c_2$, and the diameters of the supports of 
$\lambda$ and $\mu$.
\end{lemma}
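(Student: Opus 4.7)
The plan is to expand the modulus-squared, use \eqref{relation} to isolate the $\xi$-integral, and apply Schur's test, reducing the estimate to a kernel bound that combines the Frostman condition \eqref{lambdahyp} on $\lambda$ with the $\alpha$-dimensionality \eqref{muhyp1} of $\mu$.

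\textbf{Step 1 (expansion and Schur).} Expand
\[
|\widehat{gd\mu}(\xi, T_x\xi)|^2 = \int\!\!\int e^{-2\pi i\langle\xi, P_x(p-q)\rangle}g(p)\overline{g(q)}\,d\mu(p)d\mu(q)
\]
using \eqref{relation}, integrate $\xi$ over $\{R\le|\xi|\le 2R\}$ to produce the Fourier transform $\widehat{\chi_R}$ of the annulus indicator on $\bbR^\ell$, and interchange orders to get
\[
I = \int\!\!\int g(p)\overline{g(q)}\,K(p-q)\,d\mu(p)d\mu(q),\qquad K(u) := \int\widehat{\chi_R}(P_x u)\,d\lambda(x).
\]
Since $\widehat{\chi_R}$ is real and even, $K$ is symmetric, and Schur's test reduces the problem to showing
\[
\sup_p\int |K(p-q)|\,d\mu(q)\lc R^{n-\alpha-\beta}.
\]

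\textbf{Step 2 (polar form of $K$).} In polar coordinates $\xi=\rho\theta$, $\rho\in[R,2R]$, $\theta\in S^{\ell-1}$, applying \eqref{relation} inside the $\lambda$-integral yields
\[
K(u) = \int_R^{2R}\rho^{\ell-1}\int_{S^{\ell-1}} e^{-2\pi i\rho\langle\theta, u'\rangle}\,\widehat{\lambda_\theta}(\rho u'')\,d\theta\,d\rho,
\]
where $\lambda_\theta$ is the pushforward of $\lambda$ under $x\mapsto T_x\theta$. Hypothesis \eqref{lambdahyp} says $\lambda_\theta(B(y,r))\le c_1 r^\beta$ uniformly in $\theta\in S^{\ell-1}$, and the standard Plancherel argument with a Schwartz cutoff gives the Frostman--Fourier estimate
\[
\int_{|w|\le T}|\widehat{\lambda_\theta}(w)|^2\,dw\lc c_1\,T^{(n-\ell)-\beta}\qquad(T\ge 1).
\]

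\textbf{Step 3 (main kernel bound).} Split $\int|K(p-q)|\,d\mu(q)$ according to $|p-q|\le 1/R$ or $|p-q|>1/R$. In the first regime, the trivial bound $|K(u)|\le\|\widehat{\chi_R}\|_\infty\|\lambda\|\lc R^\ell$ together with $\mu(B(p,1/R))\lc R^{-\alpha}$ gives a contribution $\lc R^{\ell-\alpha}$, which is at most $R^{n-\alpha-\beta}$ since $\beta\le n-\ell$. In the second regime, apply Cauchy--Schwarz in $\theta$ to the polar form of $K$, insert the Frostman--Fourier estimate of Step 2 at scale $T\sim R|p-q|$, and decompose $|p-q|$ dyadically; summing the dyadic contributions using $\mu(B(p,r))\lc r^\alpha$ produces the target bound $R^{n-\alpha-\beta}$.

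The main obstacle is Step 3: the trivial bound on $K$ is weaker than the target by a factor of $R^{n-\ell-\beta}\ge 1$, so one must genuinely exploit the Fourier decay of $\widehat{\lambda_\theta}$ provided by \eqref{lambdahyp}. Calibrating the three Frostman-type inputs---the $\ell$-dimensional Lebesgue volume of the annulus, the $\beta$-dimensionality of $\lambda_\theta$ on $\bbR^{n-\ell}$, and the $\alpha$-dimensionality of $\mu$ on $\bbR^n$---is what produces exactly the exponent $n-\alpha-\beta$.
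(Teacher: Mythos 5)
Your Step 1 reduction and the Frostman--Fourier estimate in Step 2 are both correct (the latter is essentially the same duality argument the paper uses to prove \eqref{est1.4}), and the near-diagonal part of Step 3 is fine. The gap is in the off-diagonal part of Step 3, and it is not a technicality: the mechanism you describe does not produce a usable bound on $|K(u)|$. In the polar form of $K$, the measure $\widehat{\lambda_\theta}$ is only ever evaluated on the one-dimensional ray $\{\rho u'':\rho\in[R,2R]\}$, whereas the Frostman--Fourier estimate of Step 2 is an average over the full $(n-\ell)$-dimensional ball $\{|w|\le T\}$; Cauchy--Schwarz in $\theta$ replaces this by an average over $\theta\in S^{\ell-1}$ at the single point $w=\rho u''$, which is a different quantity and is not controlled by \eqref{lambdahyp}. (When $\ell=1$ the sphere is two points and the step does literally nothing.) Hypothesis \eqref{lambdahyp} gives no pointwise decay of $\widehat{\lambda_\theta}$ at all --- a $\beta$-dimensional measure can have $|\widehat{\lambda_\theta}(w)|$ bounded below along sequences $w\to\infty$ --- so no pointwise bound on $|K(u)|$ better than $R^{\ell}$ times a non-decaying factor is available on the bad set of directions.

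Even granting the dream pointwise bound $|\widehat{\lambda_\theta}(w)|\lesssim |w|^{-\beta/2}$, Schur's test still fails by a power of $R$: one gets $|K(u)|\lesssim R^{\ell}(R|u''|)^{-\beta/2}$, so the dyadic sum over $|p-q|\sim r$ is dominated by $r\sim 1$ and yields at best $R^{\ell-\beta/2}$, whereas the target $R^{n-\alpha-\beta}$ is a \emph{negative} power of $R$ precisely in the regime of interest ($\alpha+\beta>n$, which is the case in the applications to Theorems \ref{unions} and \ref{transforms}). The defect is structural: taking absolute values of $K(p-q)$ discards the positivity of $|\widehat{gd\mu}|^2$, which is exactly what the argument must exploit. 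The paper keeps that positivity by writing $\widehat{gd\mu}=\widehat{\kappa}\ast\widehat{gd\mu}$ for a cutoff $\kappa\equiv 1$ on $\mathrm{supp}\,\mu$ and bounding the left side by $\int m_R(p)\,|\widehat{gd\mu}(p)|^2\,dp$ with the nonnegative weight $m_R(p)=\int\int_{R\le|\xi|\le 2R}|\widehat{\kappa}((\xi,T_x\xi)-p)|\,d\xi\,d\lambda(x)\lesssim R^{-\beta}(1+\mathrm{dist}(\Gamma_R,p))^{-M_1}$; here \eqref{lambdahyp} enters only through this pointwise weight bound, and \eqref{muhyp1} enters only through $\int_{|p|\le CR}|\widehat{gd\mu}|^2\,dp\lesssim R^{n-\alpha}\|g\|^2_{L^2(\mu)}$. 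The two Frostman conditions are used in decoupled ways, which is what allows the exponents to add to $n-\alpha-\beta$; coupling them through a single kernel and Schur's test, as you propose, loses this. To salvage your outline you would need to abandon the absolute-value Schur test and restore the $L^2$ structure, at which point you essentially recover the paper's argument.
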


\begin{proof}
Fix $\kappa\in C^{\infty}_c (\bbR^n )$ with $\kappa =1$ on the support of $\mu$. 
Define
\begin{equation*}
\Gamma_R \doteq \big\{(\xi,T_x \xi):\xi\in\bbR^\ell, R\leq |\xi|\leq 2R,\ x\in\text{supp}(\lambda )\big\}.
\end{equation*}
Since 
\begin{equation*}
\int_{\bbR^n} |\widehat{\kappa}\big((\xi ,T_x \xi)-p\big)|\, dp \lesssim 1
\end{equation*}
we have
\begin{multline*}
\int_{R\leq |\xi |\leq 2R}\,\int \big|\widehat{gd\mu}(\xi ,T_x \xi)\big|^2 \, d\lambda (x)\, d\xi = \\
\int_{R\leq |\xi |\leq 2R}\,\int \Big|\int_{\bbR^n} \widehat{\kappa}\big((\xi ,T_x \xi)-p\big)\,\widehat{gd\mu}(p)\, dp\Big|^2 \, 
d\lambda (x)\, d\xi \lesssim \\
\int_{\bbR^n} \Big(\int_{R\leq |\xi |\leq 2R}\,\int\big|\widehat{\kappa}\big((\xi ,T_x \xi)-p\big)\big|\,
d\lambda (x)\,d\xi\Big) \big|\widehat{gd\mu}(p)|^2 \,dp.
\end{multline*}
Let $M=M_1 +M_2 +M_3$ where $M_1 >n$, $M_2 >\beta$, and $M_3 >\ell$.
We write $p=(p_1 ,p_2 )$ with $p_1 \in \bbR^\ell$, $p_2 \in \bbR^{n-\ell}$ and estimate
\begin{multline*}
\int_{R\leq |\xi |\leq 2R}\,\int\big|\widehat{\kappa}\big((\xi ,T_x \xi )-p\big)\big|\,d\lambda (x)\,d\xi \lesssim
\int_{R\leq |\xi |\leq 2R}\,\int\frac{1}{(1+|(\xi ,T_x \xi )-p |)^{M}}\,d\lambda (x)\,d\xi \lesssim \\
\frac{1}{\big(1+\text{dist}(\Gamma_R ,p)\big)^{M_1}}
\int_{R\leq |\xi |\leq 2R}\Big(\int_{\bbR^{\ell(n-\ell)}}
\frac{1}{(1+|T_x  \xi -p_2 |)^{M_2}}
\,d\lambda (x)\Big)
\frac{1}{(1+|\xi-p_1 |)^{M_3}}\, d\xi\lesssim 
\end{multline*}
\begin{equation*}
\frac{1}{R^\beta\,\big(1+\text{dist}(\Gamma_R ,p)\big)^{M_1}},
\end{equation*}
where we have used $|\xi|\geq R$ and \eqref{lambdahyp} in bounding the $x$-integral. Thus
\begin{equation*}
\int_{R\leq |\xi |\leq 2R}\,\int \big|\widehat{gd\mu}(\xi ,T_x \xi)\big|^2 \, d\lambda (x)\, d\xi \lesssim
\frac{1}{R^\beta}\int_{\bbR^n}\frac{|\widehat{gd\mu}(p)|^2}{\big(1+\text{dist}(\Gamma_R ,p)\big)^{M_1}}\, dp
\end{equation*}
and so
\begin{multline}\label{est1}
\int_{R\leq |\xi |\leq 2R}\,\int \big|\widehat{gd\mu}(\xi ,T_x \xi)\big|^2 \, d\lambda (x)\, d\xi \lesssim \\
\frac{1}{R^\beta}\int_{B(0,c_3 R)}|\widehat{gd\mu}(p)|^2 \, dp \,+
\frac{\|g\|_{L^2 (\mu )}^2}{R^\beta} \int_{\{|p|\geq c_3 R\}}
\frac{1}{\big(1+\text{dist}(\Gamma_R ,p)\big)^{M_1}}\, dp,
\end{multline}
where $c_3$, depending on the diameter of the support of $\lambda$, is such that $|p|\geq c_3 R$ implies 
$\text{dist} (\Gamma_R ,p)\geq |p|$. 
We estimate the first term by duality:
\begin{multline}\label{est1.7}
\Big(\int_{B(0,c_3 R)}|\widehat{gd\mu}(p)|^2 \, dp\Big)^{1/2}=
\sup \Big\{\Big|\int_{B(0,c_3 R)}f(p)\, \widehat{gd\mu}(p)\,dp\Big|:\|f\|_{L^2 (B(0,c_3 R))}\leq 1
\Big\}\leq\\
\sup \Big\{\Big|\int_{\bbR^n}\widehat{f}(p)\, g(p)\,d\mu (p)\Big|:\|f\|_{L^2 }\leq 1,
\,\text{supp}(f)\subset B(0,c_3 R)
\Big\}
\lesssim \\
\|g\|_{L^2 (\mu )}
\sup \Big\{\Big(\int_{\bbR^n}|\widehat{f}(p)|^2\, d\mu (p)\Big)^{1/2}:\|f\|_{L^2 }\leq 1,
\,\text{supp}(f)\subset B(0,c_3 R)
\Big\}.
\end{multline}
Fix a Schwartz function $\phi$ on $\bbR^n$ such that $\phi (p)=1$ for $|p|\leq c_3 R$ and
\begin{equation}\label{hyp2}
\text{supp}(\widehat{\phi})\subset B(0,1/10R),\ \|\widehat{\phi}\|_{\infty}\lesssim R^{n}.
\end{equation}
Suppose $f$ is supported in $B(0,c_3 R)$ and satisfies $\|f\|_2 =1$. Since $\widehat{f}=\widehat{f}\ast\widehat{\phi}$ we have
\begin{equation*}
|\widehat{f}|\leq (|\widehat{f}|^2 \ast |\widehat{\phi}|)^{1/2}\,\|\widehat{\phi}\|_1^{1/2}\lesssim
(|\widehat{f}|^2 \ast |\widehat{\phi}|)^{1/2}\
\end{equation*}
and so 
\begin{multline*}
\int_{\bbR^n}|\widehat{f}(p)|^2\, d\mu (p)\lesssim
\int_{\bbR^n}(|\widehat{f}|^2 \ast |\widehat{\phi}|)(p)\, d\mu (p)= \\
\int_{\bbR^n}|\widehat{f}(q)|^2 \int_{\bbR^n} |\widehat{\phi}(p-q)|\,d\mu (p)\,dq \lesssim
R^{n-\alpha}\int_{\bbR^n}|\widehat{f}(q)|^2\, dq =R^{n-\alpha}
\end{multline*}
since, by \eqref{muhyp1} and \eqref{hyp2}, the $p$-integral in the third from last term is 
$\lesssim R^{n-\alpha}$. Now \eqref{est1.7} implies that
\begin{equation}\label{est1.4}
\int_{B(0,c_3 R)}|\widehat{gd\mu}(p)|^2 \, dp \lesssim \|g\|_{L^2 (\mu )}^2 \, R^{n-\alpha}.
\end{equation}
Since $|p|\geq c_3 R$ implies 
 $\text{dist}(\Gamma_R ,p)\gtrsim |p|$,
\begin{equation}\label{est1.5}
 \int_{\{|p|\geq c_3 R\}}
\frac{1}{\big(1+\text{dist}(\Gamma_R ,p)\big)^{M_1}}\, dp \lesssim
\int_{\bbR^n}\frac{1}{\big(1+|p|\big)^{M_1}}\, dy <
\infty ,
\end{equation}
where the last inequality follows because $M_1 >n$. From \eqref{est1}, \eqref{est1.4}, 
\eqref{est1.5}, and $\alpha <n$ we then have 
\begin{equation*}
\int_{R\leq |\xi |\leq 2R}\,\int \big|\widehat{gd\mu}(\xi ,T_x \xi)\big|^2 \, d\lambda (x)\, d\xi \lesssim 
R^{n-\alpha -\beta}\,\|g\|_{L^2 (\mu )}^2 
\end{equation*}
as desired.
\end{proof}

\section{Proofs of the theorems}

\noindent{\it Proof of Theorem \ref{projections}:}
Because of the strict inequality in \eqref{sigmahyp} we can assume that $E$ carries a 
compactly-supported Borel probability measure $\mu$ satisfying 
\begin{equation*}
\mu \big(B(p,r)\big)\leq c_2 \, r^{\alpha}
\end{equation*}
for some $c_2 >0$ and all $p\in\bbR^n$ and all $r>0$.
Write $P_x (\mu )$ for the push-forward measure on $\bbR^\ell$ given by 
\begin{equation*}
\int_{\bbR^\ell} f\,dP_x (\mu )=\int_{\bbR^{n}}f\big( P_x p\big)\,d\mu (p). 
\end{equation*}
It is enough to show that 
\begin{equation}\label{equ1}
\int_{\bbR^{\ell(n-\ell)}}\int_{\bbR^\ell}|\widehat{P_x (\mu )}(\xi )|^2 \, \frac{d\xi}{(1+|\xi |)^{\ell-\sigma }}
\,d\lambda (x)<\infty .
\end{equation}
But, by  \eqref{relation},
\begin{equation*}
\widehat{P_x (\mu )}(\xi )=\int_{\bbR^n}
e^{-2\pi i \langle \xi ,P_x p\rangle}
\,d\mu (p) =\int_{\bbR^n}
e^{-2\pi i \langle (\xi ,T_x \xi ) ,p\rangle}
\,d\mu (p)
=
\widehat{\mu}(\xi ,T_x \xi ).
\end{equation*}
Thus 
\begin{equation*}
\int_{\bbR^{\ell(n-\ell)}}\int_{2^j \leq |\xi |\leq 2^{j+1}}
|\widehat{P_x (\mu )}(\xi )|^2 \, \frac{d\xi}{(1+|\xi |)^{\ell-\sigma }}\, d\lambda (x)
\lesssim 2^{(n-\ell+\sigma -\alpha -\beta)j}
\end{equation*}
by Lemma \ref{mainlemma}, and then \eqref{equ1} follows from \eqref{sigmahyp}.

\

\noindent{\it Proof of Theorem \ref{unions}:}
It is enough to observe that for almost every $$x=(x_1 ,\dots, x_k )\in[0,1]^k$$ the set 
\begin{equation}\label{xsection}
\big\{ (y_0^1 +\sum_{i=1}^k x_i y^1_i \,,\dots ,\, y_0^{d-k} +\sum_{i=1}^k x_i y^{d-k}_i ):
y\in S\big\}
\end{equation}
has Sobolev dimension exceeding $d-k$. We will deduce this from Theorem \ref{projections}.

In Theorem \ref{projections} we set $n=(k+1)(d-k)$, set $\ell =d-k$, and take $\lambda$ to be $k$-dimensional Lebesgue measure 
on a copy of $[0,1]^k$ embedded in $\bbR^{\ell(n-\ell)}$ in such a way that the mapping $P_x$ in \eqref{projdef} 
is given by 
\begin{equation}\label{Pdef2}
P_x y =(y_0^1 +\sum_{i=1}^k x_i y^1_i ,\,\dots ,\, y_0^{d-k} +\sum_{i=1}^k x_i y^{d-k}_i )
\end{equation}
for $x=(x_1 ,\dots ,x_k )\in [0,1]^k$. We will note below that \eqref{lambdahypagain} is satisfied with $\beta =k$. It will then follow from our hypothesis $\alpha >(k+1)(d-k)-k$ that $\ell-n+\alpha +\beta>d-k$. Therefore  
$\sigma$ in \eqref{sigmahyp} may be chosen to satisfy $\sigma >d-k$. It will then follow from
Theorem \ref{projections} that the Sobolev dimension of \eqref{xsection} 
exceeds $d-k$ for almost all $x\in [0,1]^k$.

To verify \eqref{lambdahypagain}, we begin by noting that \eqref{Pdef2} and \eqref{relation} imply 
\begin{equation*}
T_x \xi =(x_1 \xi ,\dots ,   x_k \xi ).
\end{equation*}
Since $|\xi |=1$ implies that $\{x_j \in [0,1]:|x_j \xi -\eta |<r\}$ has one-dimensional Lebesgue measure 
$\lesssim r$ for any $\eta\in\bbR^{d-k}$, it follows from the definition of $\lambda$ that \eqref{lambdahypagain} is satisfied with $\beta =k$.

\

\noindent{\it Proof of Theorem \ref{transforms}:}
To establish \eqref{Test} by duality it is enough to show that 
\begin{equation*}
\int_{[0,1]^k}\|T^* f(x,x' )\|^2_{L^q_{x'}}\,dx\leq C\, \|g\|^2_{L^2 (\mu )}.
\end{equation*}
We will apply Lemma \ref{mainlemma} as in the proof of Theorem \ref{projections}.

As above, set $n=(k+1)(d-k)$, set $\ell =d-k$, and take $\lambda$ to be $k$-dimensional Lebesgue measure 
on a copy of $[0,1]^k$ embedded in $\bbR^{\ell(n-\ell)}$ in such a way that the mapping $P_x$ 
is given by \eqref{Pdef2}
%
%
for $x\in [0,1]^k$. We recall that \eqref{lambdahypagain} is satisfied with $\beta =k$.

Now 
\begin{multline*}
\langle f,T^* g\rangle = \langle Tf,g\rangle=\\
\int_{\bbR^{(k+1)(d-k)}}\int_{[0,1]^k}f\big(x_1 ,\dots ,x_k ,
y_0^1 +\sum_{i=1}^k x_i y^1_i ,\,\dots ,\, y_0^{d-k} +\sum_{i=1}^k x_i y^{d-k}_i \big)\,dx\, g(y)\,d\mu (y).
\end{multline*}
Thus for $x\in[0,1]^k$ and $\xi\in\bbR^{d-k}$ we have
by \eqref{Pdef2} and \eqref{relation}
\begin{multline*}
\widehat{T^* g (x,\cdot )}(\xi )=
\int_{\bbR^{(k+1)(d-k)}}
e^{-2\pi i \langle \xi , P_x (y)\rangle}\,g(y)\,d\mu (y)=\\
\int_{\bbR^{(k+1)(d-k)}}
e^{-2\pi i \langle (\xi ,T_x \xi ), y\rangle }\,g(y)\,d\mu (y)=\widehat{gd\mu}(\xi ,T_x \xi )
\end{multline*}
and so 
\begin{multline*}
\int_{[0,1]^k}\int_{\bbR^{d-k}}|\widehat{T^* g (x,\cdot )}(\xi )|^2 \, |\xi|^{\alpha-(k+1)(d-k)+k-\epsilon} \,d\xi\, dx =\\
\int_{[0,1]^k}\int_{\bbR^{d-k}}|\widehat{gd\mu}(\xi ,T_x \xi )|^2 \,  |\xi|^{\alpha-(k+1)(d-k)+k-\epsilon} \,d\xi\, dx .
\end{multline*}
Since
\begin{equation*}
\int_{[0,1]^k}\int_{2^j \leq |\xi |\leq 2^{j+1}}
|\widehat{gd\mu}(\xi ,T_x \xi )|^2 \, d\xi\, dx \lesssim 2^{j((k+1)(d-k)-k-\alpha )}\,\|g\|^2_{L^2 (\mu )}
\end{equation*}
by Lemma \ref{mainlemma}, it follows that 
\begin{equation*}
\int_{[0,1]^k}\int_{\bbR^{d-k}}
|\widehat{T^* g (x,\cdot )}(\xi )|^2
 \,  |\xi|^{\alpha-(k+1)(d-k)+k-\epsilon} \,d\xi\, dx
\lesssim  \|g\|^2_{L^2 (\mu )}.
\end{equation*}
Then fractional integration shows that 
\begin{equation*}
\int_{[0,1]^k}\| T^* g (x,x' )\|^2 _{L^q_{x'}}\, dx \lesssim \|g\|^2_{L^2 (\mu )}
\end{equation*}
as desired.

\section{Comments}

\noindent (i) Here is an example which shows that the hypothesis $\alpha >(k+1)(d-k)-k$ in Theorem \ref{unions} is necessary. 
Fix a compact set $K\subset [0,1]$ having Hausdorff dimension $1$ and (one-dimensional) 
Lebesgue measure $0$. With the notation of Theorem \ref{unions}, the set $S$
given by 
\begin{multline*}
S=\big\{y=(y_i^j ):y_i^j \in [0,1] \text{ if } 0\leq i\leq k, 1\leq j \leq d-k-1;\\
 y_0^{d-k}\in K;\, y_i^{d-k}=0 \text{ if }1\leq i\leq k
\big\}
\end{multline*}
has Hausdorff dimension $(k+1)(d-k-1)+1 =(k+1)(d-k)-k$, while $\cup_{y\in S}\,\pi_y$ has $d$-dimensional 
Lebesgue measure $0$.

\ 

\noindent (ii) Theorem \ref{unions} can easily be deduced directly from Theorem \ref{transforms} (by
applying the latter theorem with $f$ the indicator function of $\cup_{y\in S}\,\pi_y$). Our deduction
of Theorem \ref{unions} from Theorem \ref{projections} represents a different way to view such problems.

\

\noindent (iii) With the notation from Theorem \ref{unions}, it is not difficult to show (the not usually sharp result) that if $S$ has Hausdorff dimension at least $\alpha$, then $\cup_{y\in S}\,\pi_y$ has Hausdorff dimension at least 
$\min\{2k+\alpha -k(d-k),d\}$:
the proof of Theorem \ref{unions} shows that for almost every $x\in  [0,1]^k$, the 
$x$-section \eqref{xsection} of $\cup_{y\in S}\,\pi_y$ 
has Sobolev dimension at least $k+\alpha -k(d-k)$. The conclusion then follows from a higher dimensional 
analogue of Theorem 5.8 in \cite{F2}. 

\

\noindent (iv) Suppose $A_0 ,A_1  \subset \bbR^n$ are compact sets with Hausdorff dimensions $\alpha_1 ,
\alpha_2$. Then, for almost every $x\in\bbR$, $A_0 +xA_1$ has Sobolev dimension at 
least $\alpha_0 +\alpha_1 +1 -n$. To see this, take $k=1$ and $d=n+1$ in the paragraph above. If $S$ is the set 
$\{y=(y_0 ,y_1 ):y_i \in A_i\}$, then the Hausdorff dimension $\alpha$ of $S$ is at least $\alpha_0 +\alpha_1$
and the $x$-sections \eqref{xsection} are the sets $A_0 +xA_1$ .

\

\noindent (v) Without the hypothesis $\alpha >(k+1)(d-k)-k$, the proof of Theorem \ref{transforms} yields 
the estimate
\begin{equation*}
\|Tf\|_{L^2 (\mu )}\leq C\, \|f\|_{L^2_x (W^{2,-\rho /2}_{x'})}
\end{equation*}
whenever $\rho <(k+\alpha -(k+1)(d-k))/2$.

\

\noindent (vi) Theorems \ref{unions} and \ref{transforms} generalize Theorems $3_H$ and $4_H$ from \cite{O}.

\

\noindent (vii) The author learned the (fairly simple) techniques appearing in the 
proof of Lemma \ref{mainlemma} from the papers \cite{W} and \cite{E}.

\end{document}